\documentclass[preprint,9pt]{elsarticle}

\usepackage{graphics}
\usepackage{amsmath,amsfonts,amssymb,latexsym}
\usepackage{natbib}
\usepackage{graphicx}
\usepackage{float}
\usepackage{geometry}
\geometry{hmargin={2cm,2cm},vmargin={1.5cm,2cm}}

\newtheorem{theorem}{Theorem}[section]

\newtheorem{proposition}[theorem]{Proposition}

\newtheorem{remark}[theorem]{Remark}
\numberwithin{equation}{section}

\newenvironment{proof}[1][Proof]{\begin{trivlist}
\item[\hskip \labelsep {\bfseries #1}]}{\end{trivlist}}

\newenvironment{example}[1][Example]{\begin{trivlist}
\item[\hskip \labelsep {\bfseries #1}]}{\end{trivlist}}

%
 {\left\lbrace\begin{array}{@{}l@{}}}%
 {\end{array}\right.}
 {\left\lbrace\begin{eqnarray}{@{}l@{}}}%
 {\end{eqnarray}\right.}

\newcommand{\R}{\mathbb{R}}

\journal{}

\begin{document}

\begin{frontmatter}

\title{Rosenbrock-type methods applied to discontinuous differential systems}

\author[focal]{Marco Berardi}
\ead{berardi@dm.uniba.it}
\address[focal]{Dipartimento di Matematica, Universit\`a degli Studi di Bari, Via E. Orabona 4, I-70125, Bari, Italy}

\begin{abstract}
In this paper we will study the  numerical solution of a discontinuous differential system by a Rosenbrock method. We will also focus on \emph{one-sided} approach in the context of Rosenbrock schemes, and we will suggest a technique based on the use of continuous extension, in order to locate the event point, with an application to discontinuous singularly perturbed systems. 
\end{abstract}

\begin{keyword} discontinuous differential systems, Rosenbrock methods, continuous extension, event detection, one-sided methods, discontinuous singularly perturbed systems. 
\MSC 60H40, 60H07.

\end{keyword}

\end{frontmatter}

\section{Introduction}\label{Sec1}
In this paper we will study a class of one-step schemes for ODEs, i.e. the class of Rosenbrock methods, in the context of ODEs with discontinuous right-hand side, with particular reference to singularly perturbed discontinuous ODEs. The class of problems we are dealing with is generally expressed in the form: 

\begin{equation}\label{system1}
x'=f(x)= \left\{
\begin{array}{rl}
 f_1(x)& \mbox{when } x \in R_1 \\
 f_2(x)& \mbox{when } x \in R_2 \\
\end{array},
\right.
\end{equation}
for $t\ge \bar{t}$ and $x(\bar{t})=v$ (see \cite{Aubin.Cellina},
\cite{Filippov},  \cite{Utkin}, \cite{Dieci.Lopez.survey}).
 The state space $\R^n$ is split
(locally) into two subspaces $R_1$ and $R_2$ by a surface
$\Sigma$ such that $\R^n=R_1 \cup\Sigma \cup R_2$.  The surface
is defined by a scalar {\em event} function $h:\R^n \to \R$, so that
the subspaces $R_1$ and $R_2$, and $\Sigma$, are characterized as
\begin{equation}\label{Sigmas}
\Sigma= \left\{x\in \R^n |\ h(x)=0 \right\},\  R_1= \left\{ x\in \R^n
|\ h(x)<0 \right\},\ R_2= \left\{x\in \R^n  | \ h(x)>0 \right\}.
\end{equation}

When solving numerically such a discontinuous system, at each integration step the occurrence of a discontinuity is checked. What we do in practice, at a general $(n+1)$-th step, is to check the sign of $h(x_n)\cdot h(x_{n+1})$: if this product is greater than zero, then we continue using the same vector field ($f_1$, or $f_2$) as in the $n$-th step. Otherwise, if the product is less than zero, this means that we need to switch to the other vector field, or to a sliding vector field; in the \emph{event driven} approach, that is the approach we will follow, this switching of the vector field requires the accurate computation of the \emph{event}, i.e. the state in which the event function $h$ vanishes; as a matter of fact, our coverage will mainly focus on the problems of the event detection. \\

Event-driven methods can be applied just when there are finitely many event points. This class of methods is widely used (see \cite{Berardi.Lopez, Dieci.Lopez.2010, Mannshardt, Piiroinen.Kuznetsov, Shampine.Thompson, Stewart2}); it seems to be particularly suited since it has been proved (see for example \cite{Dieci.Lopez.survey}, and Section 3 of this paper) that the order of any method (both explicit and implicit, both
 one-step and multi-step) falls to one when discontinuity occurs and the event is not accurately located. Hence, we could say that the event location reduces the stiffness of the problem due to the discontinuity. \\ 

Another important issue of this paper is the so-called \emph{one-sided approach}. As a matter of fact, when we numerically solve a discontinuous system of the form (\ref{system1}), we would wish that the vector fields $f_1$ and/or $f_2$ would be defined also beyond $\Sigma$. Neverthless, in certain situations, the vector field is not defined everywhere because the model is designed to be applicable only in certain regions of the state space. This particular type of systems, sometimes called systems with \emph{model singularities}, can be found, for example, in \cite{Najafi.Azil.Nikoukhah, Esposito.Kumar.2004, Esposito.Kumar.2007}.

\smallskip\noindent
As simple example (taken from \cite{Najafi.Azil.Nikoukhah}) 
we may consider 

\begin{equation}\label{Najafi}
  x'=
\left\{
\begin{array}{rl}
 x\sqrt{1-t}   & \mbox{when }  t\le 1 \\
 0& \mbox{when } t>1  \\
\end{array},
\right.  
\end{equation}
where one of the vector fields is not defined when $t>1$. 
In general, every differential system whose right-hand side involves square roots, logarithms, inverse trigonometric functions, possesses a \emph{singularity}, where by singularity we mean the region of the state space in which the derivative function $f_i$ in (\ref{system1}) is undefined. The events that occur in a neighborhood of a model singularity are often referred to as \emph{unilateral events}, which means that they should be detected without allowing the numerical solution to trespass the event itself. \\
For this reason, we cannot use implicit methods, since they require evaluating $f(x)$ at possibly singular endpoints. This is the rationale of the approach proposed in Section 3.

We know from Filippov theory (see for example \cite{Dieci.Lopez.2011}) what can happen when the solution 'hits' the surface $\Sigma$. 
We will focus on the event location: what will happen after the event is not our present concern.\\
\smallskip

Finally, we will focus on a particular class of discontinuous ODEs: the discontinuous singularly perturbed systems. The presence of a singular perturbation in a discontinuous system arises in a lot of applications (see \cite{AlvarezGallego.SilvaNavarro, Fridman.2,  Fridman.SotoCota.Loukianov.Canedo, Heck, Heck.Haddad, Sieber.Kowalczyk}). \\
Of course these systems are difficult to solve, because of the discontinuity, and, most of all, because of the singular perturbation, which introduces a strong stiffness in the problem. The use of Rosenbrock methods is particularly convenient for these systems, because of the low computational effort, and of the good stability properties. This will be shown in Section 5. 

\section{Rosenbrock methods}
As we shall see in the following, Rosenbrock schemes turn out to be very advantageous in the context both of one-sided methods and of discontinuous singularly perturbed problems.\\
Rosenbrock methods come out from the linearization of diagonally implicit Runge-Kutta methods (see \cite{Hairer.Wanner.2008}). They preserve good stability properties typical of implicit schemes; on the other hand, an $s$-stage Rosenbrock method requires a lower computational effort, since just $s$ linear systems must be solved. [Frequently, as we shall see later, we can handle the method in such a way that the $s$ linear systems to be solved have the same coefficient matrix.]
\\
For an autonomous system, like the one in (\ref{system1}), an  {\em $s$-stage Rosenbrock method } has the following expression:
\begin{equation}\label{aut_Rosenb}
\begin{array}{l}
x_1 = x_0 + \sum_{i = 1}^{s}b_i k_i \\
k_i = \tau f\left( x_0 + \sum_{i = 1}^{i-1} \alpha_{ij}k_j \right) + \tau J  \sum_{i = 1}^{i} \gamma_{ij} k_j
\end{array},
\end{equation}
where $\alpha_{ij}$, $\gamma_{ij}$ and $b_i$ are the coefficients of the method, and $J = f'(x_0)$.  \\
Each stage of this method requires the solution of $s$- linear systems with unknowns $k_i$, and with matrix $I - \tau \gamma_{ii}J$. Most popular Rosenbrock methods (see, for example, \cite{Roche}) set $\gamma_{ii} = \gamma$, for every $i = 1, ... ,s$. This position is computationally advantageous, since all the matrices are equal and we only need one LU-factorization per step. Particularly interesting is the use of Rosenbrock methods in a singularly perturbed system with discontinuous right-hand side, as we shall see in section 6.
\\

\subsection{An example of order reduction}
The phenomenon of order reduction in discontinuous differential systems, when the event is not accurately located, is well-known in the literature. Gear and \O sterby, for instance, deduced the order reduction in Predictor-Corrector methods (see \cite{Gear.Osterby}). On the other hand, according to the pioneering work of Mannshardt, (see \cite{Mannshardt}), Lopez and Dieci (see \cite{Dieci.Lopez.survey}) have accurately computed the global error for explicit Euler method; they have shown that there are two contributions to the $O(\tau)$ term in the global error: the first depends on the fact that Euler's method is a first order method, while the second contribution comes directly from the jump and does not depend on the order of the method. \\ 
We will give an example of the order reduction of a $2$-stages Rosenbrock method, following the approach of \cite{Dieci.Lopez.survey} for the explicit Euler method. \\
What we want to do is to evaluate the leading term of the local truncation error in the discontinuity interval. For simplicity of notation, we assume $t_0 = 0$, but this assumption is not restrictive. Naturally, localizing assumption applies, i.e. $x_0 = x(0)$; also, we call $\xi_1$ the instant of the event. \\
Our 2-stage Rosenbrock methods, applied to the problem (\ref{system1}), reads:
\begin{equation}\label{Rosenbrock_2_step}
\begin{array}{l}
x_1 = x_0 + \frac{3}{2} k_1 + \frac{1}{2}k_2, \quad \textrm{where}\\
(I-\tau \gamma J) k_1 = \tau f( x_0 ),\\
(I-\tau \gamma J) k_2 = \tau f( x_0 + k_1).
\end{array}
\end{equation}

Calling $A$ the inverse of the matrix $(I-\tau \gamma J)$, and assuming that both $x_0$ and $x_0 + \tau A f_1(x_0)$ are in $R_1$, the method (\ref{Rosenbrock_2_step}) can be written as 
\begin{equation}
\label{alternative_formulation}
x_1  = x_0 + \frac{3}{2} \tau A f_1(x_0) + \frac{1}{2} \tau A f_1 \big(x_0 + \tau A f_1(x_0)) - \tau A^2 f_1(x_0 \big).
\end{equation}

A Taylor's expansion of the exact solution up to the first order gives:
\begin{equation}
x(\tau) = x_0 + \xi_1f_1(x_0) + (\tau - \xi_1)f_2(x(\xi_1)) + O(\tau^2),
\end{equation}
while the local truncation error ($l.t.e.$) at $t_1 = \tau$ becomes: 
\begin{equation} \label{ISABELLA}
\begin{split}
\emph{l.t.e.} &=  \xi_1f_1(x_0) + (\tau- \xi_1)f_2(x(\xi_1)) + O(\tau^2) - \frac{3}{2} k_1 - \frac{1}{2} k_2 =   \\
 &= \xi_1 f_1(x_0) + (\tau- \xi_1)f_2(x(\xi_1)) + O(\tau^2) - \frac{3}{2}A \tau f_1(x_0)  \\
 & \quad - \frac{1}{2}A \tau f_1(x_0+ A\tau f_1(x_0)) + \tau A^2 f_1(x_0) + O(\tau^2).  
\end{split}
\end{equation}

A first order approximation of our second-order method is sufficient to highlight the proportionality of the local truncation error (in the discontinuity interval) with the jump, by means of a positive proportionality factor that is less than $\tau$. 
Thus, denoting by $\rho(\gamma \tau J)$ the spectral radius of matrix $\gamma \tau J$, if $\rho(\gamma \tau J) < 1$, then 
\begin{equation} \label{Marial}
A = I + \gamma \tau J + O(\tau^2).
\end{equation}
On the other hand, if $v := A f_1(x_0)$, we can expand the term $f_1(x_0 + \tau v)$ as
\begin{equation} \label{sviluppo_1}
f_1(x_0 + \tau v) = f_1(x_0) + \tau J_* v, 
\end{equation}
where $J_*$ is the Jacobian of $f_1$ evaluated in some point in $R_1$. \\
By means of (\ref{Marial}) and (\ref{sviluppo_1}) we get that (\ref{ISABELLA}) becomes
\begin{equation*} 
\begin{split}
\emph{l.t.e.} &=  \xi_1f_1(x_0) + (\tau- \xi_1)f_2(x(\xi_1)) + O(\tau^2) - \frac{3}{2}\tau f_1(x_0)  \\
 & \quad - \frac{1}{2}\tau f_1(x_0) + \tau f_1(x_0) + O(\tau^2) \\
 &= \xi_1f_1(x_0) + (\tau- \xi_1)f_2(x(\xi_1)) + O(\tau^2) -\tau f_1(x_0) + O(\tau^2) \\
 &= (\tau - \xi_1) \big[ f_2(x(\xi_1)) - f_1(x_0) \big] + O(\tau^2). 
\end{split}
\end{equation*}
Finally, since
\[
f_1(x_0) = f_1\left( x \left( \xi_1 \right) \right) + \left( x_0 - x\left( \xi_1 \right) \right) J_{f_1}\left(x\left(\xi_1\right)\right) + O(\tau^2),
\]
and
\[
x_0 = x\left(\xi_1\right) - \xi_1 f_1\left( x\left( \xi_1 \right) \right) + O(\tau^2),
\]
we get that
\begin{equation*} 
\emph{l.t.e.} = (\tau - \xi_1) \big[ f_2\left(x\left(\xi_1\right)\right) - f_1\left(x\left(\xi_1\right)\right) \big] + O(\tau^2). 
\end{equation*}

We have thus shown the order reduction of this second order Rosenbrock method when applied to a discontinuous differential equation like (\ref{system1}).\\
\\
\subsection{Continuous extension of Rosenbrock methods}
In general, any numerical method for ODEs provides an approximation of the solution at certain mesh points. On the other hand, in certain applications (graphics, delay differential equations, initial value problems with driving conditions), these discrete values are not enough. We could need a \emph{dense output}, i.e., a numerical solution defined in each point $t$ in the integration interval, $[0, T]$. The event location, in the context of discontinuous differential equations, is one of the cases in which the idea of continuous extension can be advantageous. Naturally, dense output formulas can be found in different ways: first of all, trivially, by piecewise linear or cubic interpolants. Nevertheless, for Runge-Kutta methods there are more specific manners: perturbed collocated solutions (see \cite{Norsett.Wanner}), and the classical \emph{continuous extension} proposed in \cite{Zennaro}. It is known that, for the classes of Gauss and Radau formulas, these collocation methods have almost half the order of the method itself. Some authors (see \cite{Enright.J.N.T_old} and \cite{Gladwell.S.B.B.}) add some extra stages to achieve an accuracy of $O(\tau^p)$, where $p$ is the order of this method. \\
Now, in the context of discontinuous differential systems, dense output for Runge-Kutta formulas have been proposed both in \cite{Enright.J.N.T}, and also in \cite{Dieci.Lopez.2011, Hairer.Wanner.2008}. In this latter paper, for example, authors are able to find the event point simply by seeking the root of a second order continuous extension of the explicit midpoint rule. In this way, further evaluations of function $f$ are avoided, and just a second-order polynomial has to be updated. 
It is noteworth that the continuous extensions of Rosenbrock methods have been proposed also in the context of DAEs: in \cite{Xin.Xiaoqiu.Degui} two Rosenbrock methods are proposed: a 4-stage Rosenbrock scheme of order 3, and a 3-stage Rosenbrock method of order 2.  \\
Here, we focus on the continuous extension of Rosenbrock methods, whose theory has been investigated -in the smooth case- in \cite{Ostermann}. 
The theory of Ostermann, who threads the same path as Zennaro (see \cite{Zennaro}) for continuos extensions of Runge-Kutta methods, gives a technique for evaluate the solution outside of the mesh, i.e. for approximate $x(t_0 + \sigma), \forall \sigma \in [0, \tau]$. 
We define
\begin{equation} 
\label{cont_ext}
X(\theta) = x_0 + \sum_{i = 1}^{s} b_i(\theta) k_i, \quad \textrm{for} \quad 0 \leq \theta  \leq 1,
\end{equation}
a \emph{continuos extension} of Rosenbrock method (\ref{aut_Rosenb}), where the functions $b_i(\theta)$ are polynomials and satisfy: 
\begin{description}
\item[i)] $b_i(0) = 0$;
\item[ii)] $b_i(1) = b_i$.
\end{description}
We point out that (\ref{cont_ext}) only needs already known facts (the stages $k_i$) and can be evaluated cheaply, since just a low-order polynomial in $\theta$ must be updated. \\
Let us denote by $[\cdot]$ the function \emph{integer part}; thus we know, by \cite{{Ostermann}}, that every Rosenbrock method of order $p$ possesses at least one continuous extension of order $q = \left[ \frac{p+1}{2} \right]$, of the form (\ref{cont_ext}); moreover the polynomials $b_i(\sigma)$ are defined in theorem 1 of \cite{Ostermann}, and have degree at most $q$. Nevertheless, Ostermann's theory does not exclude the chance of finding a continuous extension that retains the order of the underlying method. \\
\smallskip

As a matter of fact, we are going to use a second order continuous extension of a second-order Rosenbrock method. The advantage of this choice, with respect to the choice in \cite{Xin.Xiaoqiu.Degui}, is that just a 2-stages method is needed to get a second-order approximation. \\
The Rosenbrock scheme we use has been introduced in \cite{Verwer.1999} and reads 
\begin{subequations}
\label{eqn:Isabella3}
\begin{align}
& x_1(\tau) = x_0 + \frac{3}{2} k_1 + \frac{1}{2} k_2, \\
& (I - \gamma \tau J) k_1 = \tau f(x_0), \label{eqn:sub} \\
& (I - \gamma \tau J) k_2 = \tau f(x_0 + k_1) - 2 \tau k_1,
\end{align}
\end{subequations}
for $\gamma = 1 - \frac{\sqrt{2}}{2}$. 
Now, a second-order continuos extension of this method has been proposed in \cite{Savcenco}: 
\begin{subequations}
\label{ISABELLA_cleavage}
\begin{align}
X_1(\theta) &= x_0 + \frac{1}{2(1 - 2\gamma)} b_1(\theta) k_1 + \frac{1}{2(1 - 2\gamma)} b_2(\theta)  k_2, \\
b_1(\theta) &=  \theta^2 + (2 - 6 \gamma)\theta , \\
b_2(\theta) &=  \theta^2 - 2 \gamma \theta.
\end{align}
\end{subequations}
\smallskip
\begin{remark}
Let us assume we are integrating by method (\ref{eqn:Isabella3}). When an event has occurred, i.e. when $h(x_0)h(x_1)<0$, then we need to compute accurately the state vector $x(\overline{\tau})$ such that $h(x(\overline{\tau})) = 0$. This is simply done by computing the root of the scalar function $H(\tau) := h(x_1(\tau))$, where $x_1(\tau)$  is the numerical solution of our Rosenbrock method in (\ref{eqn:Isabella3}). This computation is accomplished by a classical root-finding routine, such as secant or bisection method. \\
Every root-finding routine will produce a sequence $\tau_i$. Of course, the computation of each term of the sequence requires of updating the internal stages $k_1$ and $k_2$, since the internal stages depend on the step size. This can be avoided by using the continuous extension (\ref{ISABELLA_cleavage}), that has the great advantage (as we shall see in numerical tests, in the last section) of preserving the order of the method. Thus, we are going to compute the root of the new function $\widehat{H}(\theta) := h(X_1(\theta))$, for $0 \leq \theta \leq 1$, with a great computational saving. 
\end{remark}
Finally, we are going to see in the next section that the continuous extension can be very useful also in order to give one-sided conditions.

\section{One-sided Rosenbrock methods}
Further to what we said in the introduction about model singularities, we provide an interesting example, proposed in 
\cite{Esposito.Kumar.2007}, of a planar two-link robotic manipulator with workspace limitations (see Figure \ref{example_2_arms}). The dynamics of this system are described by the following system of ODEs:
\begin{subequations}
\begin{align}
\theta_1' = \omega_1; \\
\theta_2' = \omega_2,
\end{align}
\end{subequations}
for certain functions $\omega_1$ and $\omega_2$. Now, we could express $\theta_1$ and $\theta_2$ as functions of $x$ and $y$, which denote the position of the two arms in the plane:

\begin{align}
\theta_1 &= {\rm arctan}\left(\frac{y}{x}\right) - {\rm arccos}\left(\frac{x^2 + y^2 + l_1^2 + l_2^2}{2l_1 \sqrt{x^2 + y^2}}\right); \\
\theta_2 &= {\rm arctan}(\frac{y - l_1 sin(\theta_1)}{x - l_1 cos(\theta_1)}) - \theta_1, 
\end{align}

\begin{figure} 
\centering
\includegraphics[scale=0.55]{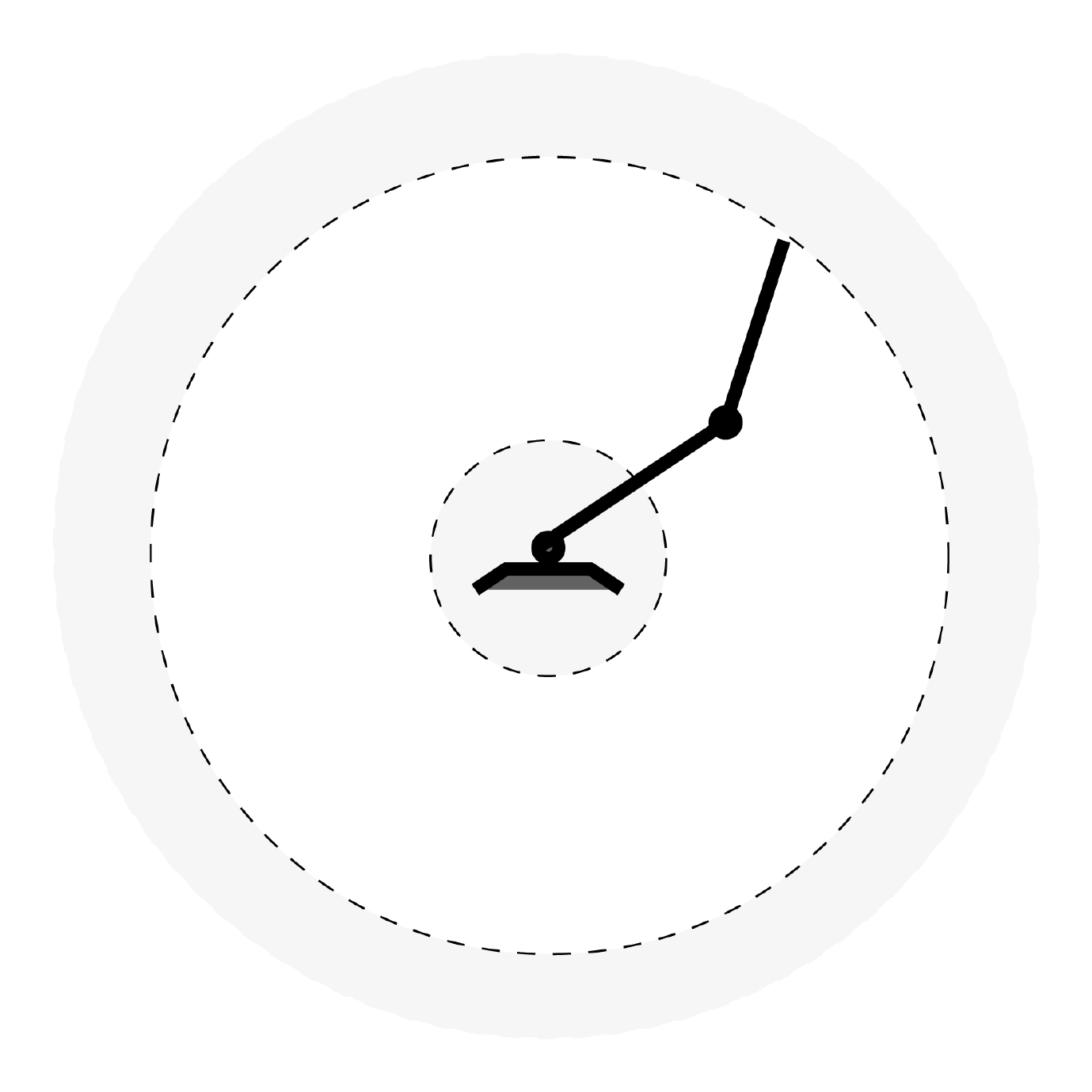}
\caption{Example of a problem which presents a singularity}
\label{example_2_arms}
\end{figure}

where  

\begin{align}
x &= l_1 {\rm cos}\theta_1 + l_2{\rm cos}(\theta_1 + \theta_2); \\
y &= l_1 {\rm sin}\theta_1 + l_2{\rm sin}(\theta_1 + \theta_2). 
\end{align}
\bigskip

Let us assume, for instance, to be in region $R_1$, thus we are integrating vector field $f_1$. 
We will consider the case in which $f_1$ cannot be evaluated outside $R_1 \cup \Sigma$. In this situation we will consider \emph{one-sided} Rosenbrock schemes that do not require the evaluation of the vector field $f_1$ outside $R_1 \cup \Sigma$. This approach has been proposed in \cite{Dieci.Lopez.2011} in the context of explicit Runge-Kutta methods. 
The idea is very simple: to give sufficient conditions for which the internal stages of a Rosenbrock method can be computed even in presence of a model singularity. For example, for the two-stages Rosenbrock method (\ref{eqn:Isabella3}), this means to give conditions for which $f_1$ can be evaluated in $x_0 + k_1$: this will ensure that $x_1$ can be computed. 
%

We will give sufficient conditions under which one-stage Rosenbrock methods approach the discontinuity from one side, and we will make use of the continuous extension (\ref{ISABELLA_cleavage}) in order to give one-sided conditions for the method (\ref{eqn:Isabella3}).

\emph{One-stage Rosenbrock: general case} \quad Assuming  $b_1 = 1$, one stage Rosenbrock method reads as:
\begin{equation}\label{Ros1}
\begin{array}{l}
x_1 = x_0 + k_1, \quad \textrm{where} \\
k_1 = \tau (I-\gamma \tau J)^{-1} f_1(x_0).
\end{array}
\end{equation}
As in \cite{Dieci.Lopez.2011}, we will assume that there is a positive constant $\delta$ such that 
\begin{equation}
\label{LanciaDelta}
h_{x}^{\top}(x)f_1(x)\geq \delta > 0, \quad \forall x \in R_1. 
\end{equation}
Now, if $x(t)$ is the solution of (\ref{system1}) in the region $R_1$, then \\
\[
\frac{d}{dt}h(x(t)) = h_{x}^{\top}(x(t))\frac{d}{dt}x(t) = h_{x}^{\top}(x(t))f_1(x(t)),
\]
and condition (\ref{LanciaDelta}) implies that the function $h$ monotonically increases along a solution trajectory in $R_1$ (close to $\Sigma$) until eventually the trajectory hits $\Sigma$ nontangentially. \\
We are going to give an analogous condition in the discrete environment of the numerical solution. 
We assume to be in the interval of the event, i.e. $x_0 \in R_1$ and $x_1 \in R_2$. In this interval, the continuous function $H(\sigma) := h(x_1(\sigma))$ changes its sign in $[0, \tau]$, then at least one $\eta \in [0, \tau]$ exists, such that $H(\eta) = 0$.\\
A sufficient condition for $\eta \in [0, \tau]$ to be the only root of the function $H$ is that the straight line segment $x_1(\sigma)$ intersects $\Sigma$ just once: a sufficient condition for this to be true is exactly the analogous of (\ref{LanciaDelta}), that is 
\begin{equation}
\label{derivata_num}
\frac{d}{d\sigma}h(x_1(\sigma))  = h_{x}^{\top}(x_1(\sigma))\frac{d}{d\sigma} x_1(\sigma) > 0, \quad \forall \sigma \in [0, \tau],
\end{equation}
where $x_1(\sigma)$ is defined from the (\ref{Ros1}) as 
\begin{equation}
x_1(\sigma) = x_0 + \sigma (I-\gamma \sigma J)^{-1}f_1(x_0) , \quad \forall \sigma \in [0, \tau], 
\end{equation}
and, obviously, 
\begin{equation}
\label{der}
\frac{d}{d\sigma}x_1(\sigma) = (I-\gamma \sigma J)^{-1}f_1(x_0) + \sigma \frac{d}{d\sigma} \big( (I-\gamma \sigma J)^{-1} \big)f_1(x_0), \quad \forall \sigma \in [0, \tau]. 
\end{equation}



Assuming again that $\rho(\gamma \sigma J) < 1$, for every $\sigma \in [0, \tau]$, we can write 

\begin{subequations}
\label{eqn:schema}
\begin{align}
(I - \gamma \sigma J)^{-1} &= \sum_{k = 0}^{\infty} (\gamma \sigma J)^{k}, \\
\frac{d}{d\sigma} (I - \gamma \sigma J)^{-1} &= \sum_{k = 1}^{\infty} k \sigma ^{k-1}(\gamma J)^{k}. \label{eqn:sub} 
\end{align}
\end{subequations}

Thus, collecting by the same power of $\sigma$, (\ref{der}) will become
\begin{equation}
\label{der3}
\frac{d}{d\sigma}x_1(\sigma) = f_1(x_0) + 2 \gamma \sigma J f_1(x_0) + 3 \gamma^2 \sigma^2 J^2 f_1(x_0) + ... ,
\end{equation}
while 
\begin{equation}
\label{expansion}
h_x^{\top}(x_1(\sigma)) = h_x^{\top}\big(x_0 + \sigma( I+ \gamma \sigma J + \gamma^2 \sigma^2 J^2+ .... ) f_1(x_0)\big); 
\end{equation}
truncating the power series up to the second power of $\sigma$, (\ref{expansion}) becomes 
\begin{equation}
\label{expansion2}
h_x^{\top}(x_1(\sigma)) \approx h_x^{\top}\big(x_0 + \sigma f_1(x_0) + \gamma \sigma^2 J f_1(x_0)),
\end{equation}
and then, just a Taylor expansion of (\ref{expansion2}) gives
\[
h_x^{\top}(x_1(\sigma)) \approx h_x^{\top} (x_0) + \big(\sigma f_1^{\top}(x_0) + \gamma \sigma^2 f_1^{\top}(x_0) J^{\top} \big) h_{xx}(x_0).
\]
With this approximation, and by virtue of (\ref{der3}), the scalar product in (\ref{derivata_num}) can be written as a power series expansion in $\sigma$: 
\begin{equation}
\label{power_series}
\begin{split}
h_{x}^{\top}(x_1(\sigma))\frac{d}{d\sigma} x_1(\sigma)  &= h_{x}^{\top}f_1 + \sigma \big[f_1^{\top} h_{xx}f_1 + 2\gamma h_x^{\top} Jf_1 \big] + \\
&\quad+ \sigma^2 \big[ 3 \gamma^2 h_x^{\top} J^2 f_1 + 2\gamma f_1^{\top} h_{xx}Jf_1 + \gamma f_1^{\top} J^{\top} h_{xx} f_1\big] \\
&\quad+ \sigma^3\big[ \cdots \big] + \ldots,
\end{split}
\end{equation}
where all the functions (both matrices and vectors) of the right-hand side are evaluated at $x_0$.\\ 
Let $a_k$ be the coefficient of $\sigma^k$ in the power series expansion (\ref{power_series}); hence inequality (\ref{derivata_num}) holds if and only if 
\[
\sum_{k = 0}^{\infty} a_k \sigma^k> 0, \quad \textrm{i.e.} \quad  \sum_{k = 0}^{2} a_k \sigma^k > - \sum_{k = 3}^{\infty} a_k \sigma^k. 
\]

In general this condition is in practice very difficult to verify. What we are going to do, in practice, is to truncate the power series up to the third term; thus, we will require just that the the sum of the first three terms of the power series in (\ref{power_series}) is greater than zero for $\sigma = \tau$. This condition can be summarized by the following proposition.
\begin{proposition}
Assuming that all the following function evaluations are performed at $x_0$, let there exist three constants $\rho_1$, $\rho_2$, and $\rho_3$, all greater than zero, and $\tau > 0$ and sufficiently small, such that
\begin{align} 
& i) \quad  h^{\top}_{x}f_1 > \rho_1, \\ 
& ii)  \quad f^{\top}_1h_{xx}f_1 + 2\gamma h^{\top}_x Jf_1 > - \rho_2, \\ 
& iii) \quad 3 \gamma^2 h^{\top}_x J^2 f_1 + 2\gamma f^{\top}_1 h_{xx}Jf_1 + \gamma f^{\top}_1 J^{\top} h_{xx} f_1 > - \rho_3, \\
& iv) \quad \rho_1 - \tau \rho_2 - \tau^2 \rho_3 > 0. 
\end{align}
Then the function $h(x_1(\sigma))$ is strictly increasing for every $\sigma \in [0, \tau]$. In particular, there exists a unique $\eta \in ( 0, \tau )$, such that $h(x_1(\eta)) = 0$.  
\end{proposition}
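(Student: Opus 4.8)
The plan is to read hypotheses i)--iii) as lower bounds on the first three coefficients of the power series (\ref{power_series}) and then reduce the positivity of $\frac{d}{d\sigma}h(x_1(\sigma))$ on $[0,\tau]$ to the elementary positivity of a concave quadratic in $\sigma$. Writing $a_0,a_1,a_2$ for the coefficients of $1,\sigma,\sigma^2$ in (\ref{power_series}), hypotheses i), ii), iii) read exactly $a_0>\rho_1$, $a_1>-\rho_2$ and $a_2>-\rho_3$. Following the truncation already introduced just before the statement, I would work with the quadratic approximation $\frac{d}{d\sigma}h(x_1(\sigma))\approx a_0+a_1\sigma+a_2\sigma^2$ of the derivative.

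First I would bound this truncated derivative from below. For $\sigma\in(0,\tau]$ one has $\sigma>0$ and $\sigma^2>0$, so the lower bounds on $a_1$ and $a_2$ give
\[
a_0+a_1\sigma+a_2\sigma^2 > \rho_1-\rho_2\,\sigma-\rho_3\,\sigma^2 =: g(\sigma),
\]
while at $\sigma=0$ the value is simply $a_0>\rho_1>0$. Thus it suffices to prove $g(\sigma)>0$ on $[0,\tau]$. Next I would observe that $g$ is a downward parabola with $g'(\sigma)=-\rho_2-2\rho_3\sigma<0$ for $\sigma\geq0$ (here $\rho_2,\rho_3>0$ are used), so $g$ is strictly decreasing on $[0,\tau]$ and attains its minimum at the right endpoint. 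Hence $g(\sigma)\geq g(\tau)=\rho_1-\tau\rho_2-\tau^2\rho_3$, which is positive precisely by hypothesis iv). Combining these, $\frac{d}{d\sigma}h(x_1(\sigma))>0$ for every $\sigma\in[0,\tau]$, so $h(x_1(\sigma))$ is strictly increasing on the whole interval.

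The conclusion about the root then follows from the event-interval assumption recalled before the statement: since $x_0\in R_1$ and $x_1\in R_2$, we have $h(x_1(0))=h(x_0)<0$ and $h(x_1(\tau))=h(x_1)>0$. Strict monotonicity together with the intermediate value theorem yields existence and uniqueness of $\eta\in(0,\tau)$ with $h(x_1(\eta))=0$.

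The step I expect to be delicate is not the quadratic estimate, which is routine, but the truncation itself: the honest object in (\ref{power_series}) is the full series $\sum_{k\geq0}a_k\sigma^k$, and the argument above controls only its first three terms. To make the conclusion rigorous one needs the tail $\sum_{k\geq3}a_k\sigma^k$ to be dominated by the margin left over in hypothesis iv). This is exactly where the assumption that $\tau$ is sufficiently small does the work: the coefficients $a_k$ are fixed polynomial expressions in the $x_0$-data and hence bounded, so the tail is $O(\tau^3)$ and is absorbed by the strictly positive value $g(\tau)$ once $\tau$ is small enough. I would therefore state the final inequality with this $O(\tau^3)$ remainder made explicit, so that hypothesis iv) guarantees positivity of the full derivative and not merely of its quadratic truncation.
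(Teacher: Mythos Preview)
Your proposal is correct and follows the same approach as the paper: the proposition is stated immediately after the derivation of the power-series expansion (\ref{power_series}) and the remark that one truncates to the first three terms, and no separate proof is given beyond that discussion. Your argument makes explicit what the paper leaves implicit---in particular, the observation that the quadratic lower bound $g(\sigma)=\rho_1-\rho_2\sigma-\rho_3\sigma^2$ is decreasing on $[0,\tau]$, so that positivity at the right endpoint (hypothesis iv)) propagates to the whole interval---and you correctly flag the truncation of the tail $\sum_{k\ge 3}a_k\sigma^k$ as the place where ``$\tau$ sufficiently small'' is doing real work, a point the paper does not spell out.
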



\emph{One-stage Rosenbrock: particular case} \quad Another approach is possible if we make the following assumption: \textbf{the matrix $(I-\gamma \sigma J)^{-1}$ is orthogonal}, i.e. $(I-\gamma \sigma J)^{-1} = (I-\gamma \sigma J)^{T}$. In this case, formula (\ref{der}) will become:
\begin{equation}
\label{deriv}
\frac{d}{d\sigma}x_1(\sigma) = f_1(x_0) -2\gamma \sigma J^{\top}f_1(x_0), \quad \forall \sigma \in [0, \tau]. 
\end{equation}
On the other hand,
\begin{equation}
\label{expansion4}
h_x^{\top}(x_1(\sigma)) \approx h_x^{\top}(x_0) + h_{xx}^{\top}(x_0) \big( \sigma f_1(x_0) - \gamma \sigma^2 J^{\top} f_1(x_0) \big), 
\end{equation}
hence, dropping the argument of the functions whenever it is $x_0$, the scalar product (\ref{derivata_num}) becomes 
\begin{equation}
\begin{split}
\label{quasifinito}
h_{x}^{\top}(x_1(\sigma))\frac{d}{d\sigma} x_1(\sigma) =& \quad h_{x}^{\top}f_1 + 
\sigma f_1^{\top} h_{xx}^{\top}f_1 - \\
& - \gamma \sigma^2 f_1^{\top} h_{xx}^{\top}J^{\top}f_1 - 2\gamma \sigma h_{x}^{\top}J^{\top}f_1 -\\
& - 2\gamma \sigma^2 f_1^{\top} h_{xx}J^{\top}f_1 + 2\gamma^2 \sigma^3f_1^{\top}Jh_{xx}J^{\top}f_1. 
\end{split}
\end{equation}
Truncating this product to the second power of $\sigma$, we get
\begin{equation}
\begin{split}
h_{x}^{\top}(x_1(\sigma))\frac{d}{d\sigma} x_1(\sigma) =& h_{x}^{\top}f_1 + 
 \sigma \big( f_1^{\top} h_{xx}^{\top}f_1 - 2\gamma h_{x}^{\top}J^{\top}f_1 \big) - \\
& - \sigma^2 \big( 2\gamma f_1^{\top} h_{xx}J^{\top}f_1 + \gamma f_1^{\top} h_{xx}^{\top}J^{\top}f_1 \big).
\end{split}
\end{equation}

Finally, sufficient conditions for (\ref{derivata_num}) to be satisfied are 
\begin{align*} 
& i) \quad  h_{x}^{\top}(x_0)f_1(x_0) > \delta_1, \\ 
& ii)  \quad f_1^{\top}(x_0) h_{xx}^{\top}(x_0)f_1(x_0) - 2\gamma h_{x}^{\top}(x_0)J^{\top}f_1(x_0) > -\rho_1, \\ 
& iii) \quad 2\gamma f_1^{\top}(x_0) h_{xx}(x_0)J^{\top}f_1(x_0) + \gamma f_1^{\top}(x_0) h_{xx}^{\top}(x_0)J^{\top}f_1(x_0) > -\rho_2;\\
& iv) \quad \delta_1 - \tau \rho_1 - \tau^2 \rho_2 > 0. 
\end{align*}
\smallskip
\emph{Two-stages Rosenbrock: the continuous extension approach} \quad In the previous section we have presented a two-stages Rosenbrock method (\ref{eqn:Isabella3}).
For this method, there are two possibilities: 

\begin{description} 
\item[ 1.a) ]  $h(x_0 + k_1) \leq 0$, \quad and \quad $h(x_1) \geq 0$,  \label{stage_below}
\item[ 1.b) ]  $h(x_0 + k_1) > 0$. 
\end{description}
\begin{figure} 
\center
\includegraphics[scale=0.50]{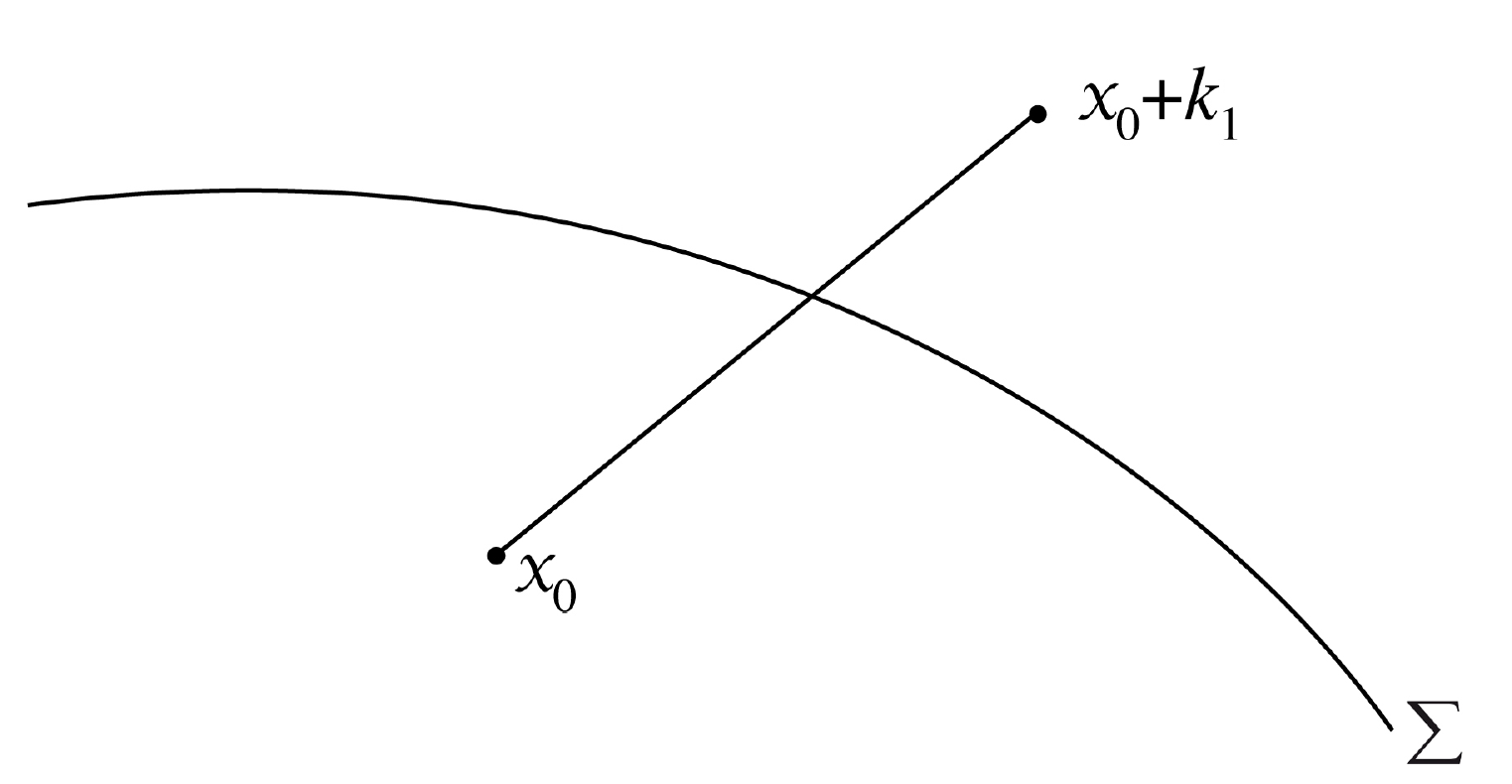}
\caption{Internal stage $(x_0 + k_1) \in R_2$}
\label{Ros_2stages_bSimon}
\end{figure}
In case {\bf 1.b)} -see Figure \ref{Ros_2stages_bSimon}-, since $h(x_0 + k_1)>0$, we cannot properly compute $x_1$; in this situation, a step reduction is needed, in such a way to find the value $\overline{\sigma}$ such that $h(x_0 + k_1(\overline{\sigma})) = 0$. Then, if $x_1(\overline{\sigma})$ is above $\Sigma$, we are back to case 1.a), with step size $\overline{\sigma}$, otherwise we continue integrating. \\
\begin{figure} 
\center
\includegraphics[scale=0.50]{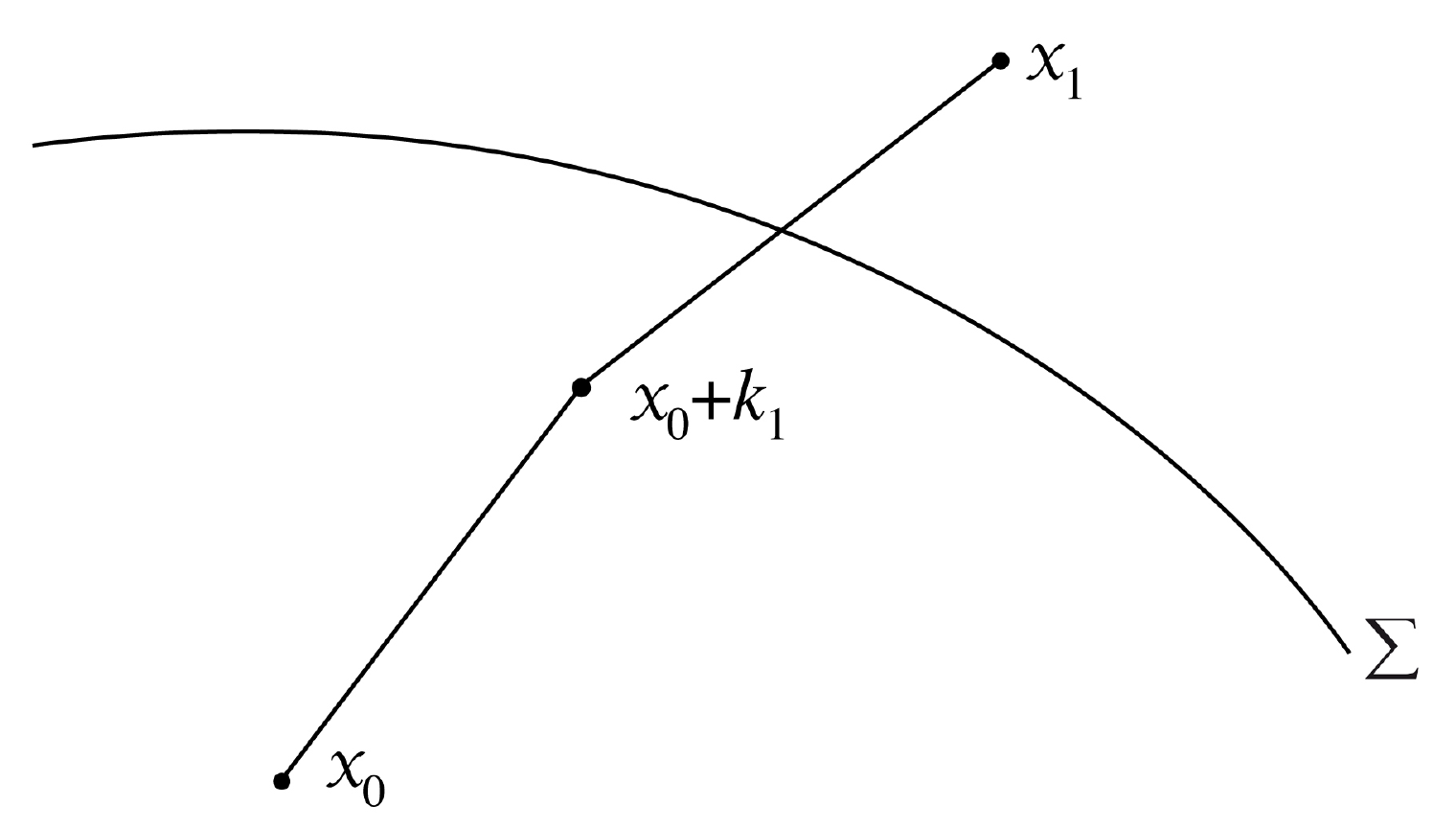}
\caption{Internal stage $(x_0 + k_1) \in R_1$}
\label{Ros_2stages_aSimon}
\end{figure}
In case {\bf 1.a)} -see Figure \ref{Ros_2stages_aSimon}- the sufficient condition which guarantees the uniqueness of $\overline{\sigma}$ such that $h(x_1(\overline{\sigma})) = 0$ is (\ref{derivata_num}). 
Now, the problem is computing $\frac{d}{d\sigma}x_1(\sigma)$. \\
\smallskip

In (\ref{ISABELLA_cleavage}) we have presented a continuous extension of method (\ref{eqn:Isabella3}). Since this continuous extension has the same order of the method, in the discontinuity interval we could confuse the method itself with its continuous extension. Thus, we could replace condition (\ref{derivata_num}) with its analogous
\begin{equation}
\label{derivata_num2}
\frac{d}{d\theta}h(X_1(\theta))  = h_{x}^{\top}(X_1(\theta))\frac{d}{d\theta} X_1(\theta) > 0, \quad \forall \theta \in [0, 1].
\end{equation}
The great advantage with respect to (\ref{derivata_num}) is in the computation of $\frac{d}{d\theta} X_1(\theta)$, since the internal stages $k_1$ and $k_2$ do not depend neither on $\sigma$ nor on $\theta$. Thus the derivative of $X_1(\theta)$ with respect to $\theta$ is computed just by deriving a couple of second order polynomials! \\
%
But we have just argued that condition (\ref{derivata_num}) is essentially equivalent to (\ref{derivata_num2}). Thus we are able to state the following theorem.

\begin{theorem}
\label{theo_2stages_Rosenbrock}
Consider the case {\bf 1.a)}, and assume that $h(x_0 + k_1) \leq 0$ (we know that, with this approach, $k_1$ does not depend on $\theta$). Denote by $c$ the constant $\frac{1}{2(1-2\gamma)}$ in the definition of continuous extension (\ref{ISABELLA_cleavage}). Moreover, assume that there exist constants $\delta_1 > 0$ and $\rho_1 >0$, and let $\tau >0$, small enough so that, for the continuous extension (\ref{ISABELLA_cleavage}), the following conditions hold:
\begin{enumerate}
                 \item $h_x^{\top}(X_1(\theta)) c [(2-6\gamma)k_1 - 2\gamma k_2] \geq \delta_1$, \quad for all \quad $\theta \in [0, 1]$;
                 \item $h_x^{\top}(X_1(\theta)) 2c (k_1 + k_2) \geq -\rho_1$, \quad for all \quad $\theta \in [0, 1]$;
                 \item $\delta_2 - \tau \rho_2 \geq 0$. 
                 \end{enumerate}
Then, the function $h(X_1(\theta))$ is strictly increasing for $\theta \in [0, 1]$. In particular, there exists a unique $\eta$ such that $h(X_1(\eta)) = 0$. 
\end{theorem}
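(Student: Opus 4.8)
The plan is to read the conclusion ``$h(X_1(\theta))$ is strictly increasing on $[0,1]$'' as the requirement that its $\theta$-derivative be strictly positive throughout, i.e.\ exactly condition (\ref{derivata_num2}). First I would differentiate the continuous extension (\ref{ISABELLA_cleavage}) directly. Since the internal stages $k_1,k_2$ do not depend on $\theta$, only the two quadratic blending polynomials are differentiated: with $b_1'(\theta)=2\theta+(2-6\gamma)$ and $b_2'(\theta)=2\theta-2\gamma$, one obtains
\[
\frac{d}{d\theta}X_1(\theta)=c\big[(2-6\gamma)k_1-2\gamma k_2\big]+2c\,\theta\,(k_1+k_2),
\]
which is affine in $\theta$: a $\theta$-independent vector plus $\theta$ times the fixed vector $2c(k_1+k_2)$. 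This explicit, cheap form is the whole point of using the continuous extension rather than reparametrising the step size.

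Next I would insert this into the scalar product $h_x^{\top}(X_1(\theta))\,\tfrac{d}{d\theta}X_1(\theta)$ and split it into its $\theta$-independent and $\theta$-linear parts, namely $h_x^{\top}(X_1(\theta))\,c[(2-6\gamma)k_1-2\gamma k_2]$ and $\theta\,h_x^{\top}(X_1(\theta))\,2c(k_1+k_2)$. Hypothesis~1 bounds the first part below by $\delta_1$, uniformly in $\theta$. For the second part I would use that $\theta\in[0,1]$: multiplying the pointwise bound of Hypothesis~2, $h_x^{\top}(X_1(\theta))\,2c(k_1+k_2)\ge-\rho_1$, by $\theta\ge0$ gives $\ge-\theta\rho_1\ge-\rho_1$. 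Summing the two bounds yields $\tfrac{d}{d\theta}h(X_1(\theta))\ge\delta_1-\rho_1$ for every $\theta\in[0,1]$, and the closing Hypothesis~3 (which I read as $\delta_1-\rho_1>0$) makes this lower bound strictly positive.

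Finally, a strictly positive derivative on the whole interval gives that $h(X_1(\theta))$ is strictly increasing. To pin down the root I would first check the boundary values using $b_i(0)=0$ and $b_i(1)=b_i$, so that $X_1(0)=x_0$ and $X_1(1)=x_1$; in case \textbf{1.a)} this gives $h(X_1(0))=h(x_0)<0$ (we are in $R_1$) and $h(X_1(1))=h(x_1)\ge0$. The intermediate value theorem then produces a root $\eta\in(0,1]$, and strict monotonicity forces it to be unique, which is the assertion.

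The main obstacle I anticipate is not the algebra but the bookkeeping around the point at which $h_x$ is evaluated: unlike the one-stage analysis leading to (\ref{power_series}), where $h_x^{\top}(x_1(\sigma))$ had to be Taylor-expanded about $x_0$, here $h_x$ is evaluated at the moving point $X_1(\theta)$, and the proof avoids any expansion precisely because Hypotheses~1 and~2 are quantified ``for all $\theta\in[0,1]$.'' The delicate point is the sign handling of the $\theta$-linear term and, relatedly, making sure the closing condition combines $\delta_1$ and $\rho_1$ correctly; since here $\theta$ ranges over $[0,1]$ rather than over $[0,\tau]$, I would double-check whether the factor $\tau$ appearing in the third hypothesis is genuinely needed or is inherited from the step-size parametrisation used in (\ref{power_series}).
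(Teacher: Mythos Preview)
Your proposal is correct and follows essentially the same route as the paper: differentiate the continuous extension, split $\frac{d}{d\theta}X_1(\theta)$ into the constant vector $c[(2-6\gamma)k_1-2\gamma k_2]$ and the $\theta$-linear part $2c\theta(k_1+k_2)$, then apply the three hypotheses. Your suspicion about the stray $\tau$ in hypothesis~3 is well founded (the paper's own proof also writes $2c\gamma$ where $2c\theta$ is meant and drops the $h$ in the final displayed inequality), and your version of the bound $\delta_1-\rho_1>0$ is the one that actually follows from hypotheses~1 and~2 as stated; you are also more explicit than the paper in invoking the intermediate value theorem via $X_1(0)=x_0$, $X_1(1)=x_1$.
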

\begin{proof}
For every $\theta \in [0, 1]$,
\[
\frac{d}{d\theta}h(X_1(\theta))  = h_{x}^{\top}(X_1(\theta))\frac{d}{d\theta} X_1(\theta) > 0.
\]
Now, given the continuous extension (\ref{ISABELLA_cleavage}), 
\begin{equation*} 
\begin{split}
\frac{d}{d\theta} X_1(\theta) &=  c \left( \frac{d}{d\theta} b_1(\theta) k_1 + \frac{d}{d\theta} b_2(\theta)k_2 \right) =  \\
 &= c\left( 2\theta + 2 - 6\gamma \right) k_1 + c \left( 2\theta -2\gamma \right) k_2,  
\end{split}
\end{equation*}
it results that 
\begin{equation*} 
\begin{split}
h_{x}^{\top}(X_1(\theta))\frac{d}{d\theta} X_1(\theta) &= h_{x}^{\top}(X_1(\theta)) c \big( \left( 2-6\gamma \right)k_1 -2\gamma k_2 \big)  \\
 &+ h_{x}^{\top}(X_1(\theta)) 2c\gamma \left(k_1 + k_2 \right).
\end{split}
\end{equation*}
Finally, using hypotheses 1., 2. and 3., we get that 
\[
\frac{d}{d\theta} X_1(\theta) \geq \delta_1 - \tau \rho_1 >0.
\]
\end{proof}

%
\section{Discontinuous singularly perturbed systems}
Discontinuous singularly perturbed systems are very interesting since they combine both the features of singularly perturbed differential systems, and the ones of the differential equations with discontinuous right-hand side. \\
We are going to study a singularly perturbed system where the discontinuity involves just the derivative of the 'slow' component, i.e.:
 
\begin{equation}
\label{sing_pert_sys}
\left\{
\begin{array}{rl} 
\dot{y} &=f(y,z); \\
\epsilon\dot{z} &=g(y,z). 
\end{array}
\right.
\end{equation}
Here $\epsilon$ is a positive small parameter, and the right-hand side of the first equation is defined in the following way: 
\begin{equation}\label{system4}
f(y,z)= \left\{
\begin{array}{rl}
 f_1(y,z)& \mbox{when } h(y,z) < 0 \\
 f_2(y,z)& \mbox{when } h(y,z) > 0 \\
\end{array},
\right.
\end{equation}
where $y = y(t) \in \R^s$ and $z = z(t) \in \R^m$, and $h:\R^s \times \R^m \rightarrow \R$ is the event function presented in the beginning. \\
Here, we remind some basic concepts of singularly perturbed systems,
\begin{em}
in the smooth case. 
\end{em}
 For a complete coverage, see \cite{OM}, or \cite{Smith}, or also \cite{Hairer.Wanner.2008}.
%
 We know that the solution of this singularly perturbed system can be written as the superposition of the \emph{outer solution}, a smooth function of the independent variable $t$, that approximates the exact solution for values of $t$ well away from the initial instant $t = 0$, and the \emph{initial layer correction}, a rapidly decaying function of the stretched time $\frac{t}{\epsilon}$, which plays an important role just in the initial $\epsilon$-thick boundary layer: outside of it is negligible. In symbols:  
 \begin{equation}
\label{sing_pert_solution}
\left\{
\begin{array}{rl} 
y(t,\epsilon) &=Y(t,\epsilon) + \epsilon \eta\left(\frac{t}{\epsilon}, \epsilon \right) \\
z(t,\epsilon) &=Z(t,\epsilon) + \zeta \left(\frac{t}{\epsilon}, \epsilon \right) 
\end{array}
\right.
\end{equation} 
where $Y(t,\epsilon)$ and $Z(t,\epsilon)$ are the outer solutions of slow and fast variable, respectively, while  $\eta\left(\frac{t}{\epsilon}, \epsilon \right)$ and $\zeta \left(\frac{t}{\epsilon}, \epsilon \right)$ are the initial layer corrections of the slow and fast variable, respectively.\\
\subsection{Singular perturbation {\rm vs} DAE} \label{DAE_Approximation}
When dealing with discontinuous singularly perturbed systems, the choice of many authors -mostly when the system is linear in both or either one of the variables- is to consider the \emph{reduced system}, i.e. the system 
\begin{subequations}
\label{eqn:DAE}
\begin{align}
\dot{y} &= f(y,z), \\
0 &= g(y,z). \label{eqn:DAE2} 
\end{align}
\end{subequations}
obtained by (\ref{sing_pert_sys}) by setting $\epsilon = 0$ (see, for example, \cite{AlvarezGallego.SilvaNavarro, Heck, Su.WuChung}). \\
As a matter of fact, this choice makes sense in particular when variable $z$ can be globally expressed as a function of $y$, in equation (\ref{eqn:DAE2}). Indeed, the fact that equation $0 = g(y,z)$ admits a global isolated solution with respect to $z$ of the form $z = g_0(y)$, is a pretty strong assumption, but it considerably reduces both the dimension and the stiffness of the problem, since by this hypothesis, the DAE (\ref{eqn:DAE}) 
corresponding to the system (\ref{sing_pert_sys}) is equivalent to the so-called \emph{reduced-order model}:
 
\begin{equation} \label{Reduced_Order_Model}
y' = f(y, g_0(y)).
\end{equation}

Under the further assumption that $\textrm{Re} {\rm Spec} \frac{\partial g}{\partial z}$, the fundamental problems are: 
\begin{enumerate}
\item is this a good approximation?
\item does the qualitative behaviour of the starting system (\ref{sing_pert_sys}) hold, when considering the reduced system (\ref{Reduced_Order_Model})?
\end{enumerate}
\emph{Answers}
\begin{enumerate}
\item First of all we assume that the event is ``far'' enough from the initial boundary layer, i.e. that the event occurs when the transient phase is over. In this way, we can reasonably neglect the fast decaying term both in the slow and in the fast variable, namely, in the notation of (\ref{sing_pert_solution}), $\eta\left(\frac{t}{\epsilon}, \epsilon \right)$ and $\zeta \left(\frac{t}{\epsilon}, \epsilon \right)$, respectively. Moreover, we assume that the events are far enough from each other.\\
These assumptions guarantee that the \emph{fast} system is smooth, i.e. the discontinuity surface is not intersected by the solution in the initial layer. 
Of course, by smooth singular perturbation theory, we know that the equation (\ref{Reduced_Order_Model}) is just an O($\epsilon$) approximation of (\ref{sing_pert_solution}). However, we have to notice that, since $\epsilon$ is a small parameter (for instance, in some real models it is $\epsilon \approx 10^{-4}$), very often an O($\epsilon$) approximation is sufficiently accurate, in applications. \\
For instance, the example proposed in \cite{Heck} decouples the system in a slow and a fast subsystem, solves them separately, and matches the solution in the border of $\epsilon$-thick boundary layer. 
\medskip

\item The answer, in general, is negative. A nice example can be found in \cite{Cardin.DaSilva.Teixeira}. Considering the system 
\begin{equation}
\label{ex_Teixeira}
\begin{array}{l} 
\left [ \begin{array}{c} y'_1   \\
  y'_2  \\
   \epsilon z' \\
\end{array}\right ]

 = \left [ \begin{array}{c} - {\rm sign}(2z - y_1)   \\
  - y_1 - y_2  \\
    y_1 - z \\
\end{array}\right ],
\end{array}
\end{equation}
simple computations show that sliding cannot occur in the system (\ref{ex_Teixeira}), because conditions for attractive sliding would be $y > \frac{3}{2} \epsilon$ and $y < \frac{1}{2} \epsilon$, which is impossible, since $\epsilon > 0$. \\
On the other hand, plugging $\epsilon = 0$ in system (\ref{ex_Teixeira}), we observe that the  reduced system exhibits attractive sliding in all points of the switching manifold.  \\
Another interesting example is presented in \cite{Sieber.Kowalczyk}. Here the discontinuous perturbed system 
\begin{equation}\label{esempio_Kowalczyk}
\left\{ \begin{array}{ccc}
x'= & - {\rm sign} [\theta x+(1-\theta)y] \\ 
\epsilon y'\hfill &=x-y \hfill 
\end{array}, \right. 
\end{equation}
depends on a parametere $\theta$ that can be greater or less than zero. It can be shown that the reduced order model has a stable equilibrium point in the origin $(0,0)$, whereas the perturbed system presents, when $\theta < 0$, an exponentially stable periodic orbit around the origin, switching between the two different vector fields $F_1$ and $F_2$. This system is the object of the numerical tests presented in the last section. 
\end{enumerate}

\subsection{Sliding or crossing}
An interesting feature in the treatment of discontinuous singularly perturbed system is the study of conditions for sliding or crossing. Assume we are on the switching manifold, i.e. that $x \in \Sigma$. From now on, each function evaluation will be accomplished in $x \in \Sigma$, so we will drop the argument of each function.\\
From Fillipov's theory, we know that the only two situations that guarantee the uniqueness of a solution when approaching the discontinuity surface are the following ones: crossing and attractive sliding. Crossing simply means that the state vector, coming from one of the vector fields (for instance, $f_1$) ``hits'' the surface $\Sigma$ and crosses it instantly. Attractive sliding means that the state vector is forced to move along $\Sigma$ with a yet to be defined vector field. For a complete coverage, see \cite{Acary.Brogliato, diBernardo-book, Dieci.Lopez.survey, Filippov, Utkin}. Instead, for the definition of the sliding vector field on the intersection of surfaces, see \cite{Dieci.Lopez.2008/5, Dieci.Lopez.2009, Dieci.Lopez.Elia, Piiroinen.Kuznetsov}. \\
 We know that, if $n = n(x) = \nabla h (x)$, in the notation of system (\ref{system1}),  
\begin{itemize}
\item
\emph{crossing} occurs if $(n^T f_1) (n^T f_2) > 0$,
\item \emph{sliding} (both attractive and repulsive) 
occurs if $(n^T f_1) (n^T f_2) < 0$.
\end{itemize}
Now, let us rewrite these conditions in the singularly perturbed case, i.e. in case of system (\ref{sing_pert_sys})-(\ref{system4}). Define 
\[
F_1=
\begin{bmatrix}
f_1   \\
\frac{g}{\epsilon}  
\end{bmatrix},
\qquad  F_2=
\begin{bmatrix}
f_2   \\
\frac{g}{\epsilon} 
\end{bmatrix}
\]
Thus, condition for sliding, (whether it is attractive or repulsive) is, naturally, 
\[
(n^T F_1) (n^T F_2) < 0,
\]
which means, 
\[
\left( h_y f_1 + \frac{1}{\epsilon} h_y g \right) \left( h_y f_2 + \frac{1}{\epsilon} h_y g \right) < 0,
\]
and reordering with respect to the power of $\epsilon ^ {-1}$,
\[
\left( h_y f_1 \right) \left( h_y f_2 \right) + \frac{1}{\epsilon} \Big( \left( h_y f_1 \right)  \left( h_z g \right) + \left( h_y f_2 \right) \left( h_z g \right) \Big) + \frac{1}{\epsilon ^2}  \left( h_z g \right)^2   < 0,
\]
from which we get, just multiplying for $\epsilon ^ 2$, the following inequality: 
\begin{equation} \label{disequazione}
\left( h_y f_1 \right) \left( h_y f_2 \right) \epsilon ^2 + \Big( \left( h_y f_1 \right)  \left( h_z g \right) + \left( h_y f_2 \right) \left( h_z g \right) \Big) \epsilon + \left( h_z g \right)^2   < 0.
\end{equation}
In this way we have just to examine an algebraic inequality in $\epsilon$. But we know in advance that $0 < \epsilon <<1$. We denote by $A$ the coeffient of $\epsilon ^2$, by $B$ the coefficient of $\epsilon$, and $C^2 = \left( h_z g \right)^2$. Naturally, $A$, $B$ and $C$ are real numbers, since they are just sums and products of scalar products. Thus, (\ref{disequazione}) can be rewritten in the following form 
\begin{equation} \label{per_fare_prima}
A \epsilon ^2 + B \epsilon + C^2 < 0.
\end{equation}
Now, it becomes clear that, in the limit for $\epsilon \rightarrow 0$, the latter inequality is not satisfied. This means, roughly speaking, that sliding is less ''likely`` than crossing, for the singularly perturbed system (\ref{sing_pert_sys}). In the following, sufficient conditions are given for which sliding and crossing occur.
\begin{proposition}
Sliding occurs if the following conditions are fullfilled:\\
\[       
A < 0, \quad B^2 - 4 AC^2 \neq 0, \quad \textrm{and} \quad \epsilon > \frac{-B - \sqrt{B^2 + 4 |A| C^2} }{2A} 
\]                 
                       
\end{proposition}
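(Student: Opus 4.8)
The plan is to read the sliding condition (\ref{per_fare_prima}) as a quadratic inequality in the single real variable $\epsilon$ and to locate the set of $\epsilon$ on which it holds. Writing $P(\epsilon) := A\epsilon^2 + B\epsilon + C^2$, the proposition preceding the statement tells us that sliding is equivalent to $P(\epsilon) < 0$, so it suffices to exhibit a threshold beyond which $P$ is negative.

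First I would record the sign structure forced by the hypothesis $A < 0$: the parabola $P$ opens downward, so $P(\epsilon) \to -\infty$ as $\epsilon \to \pm\infty$, while $P$ is positive strictly between its real roots (when they exist) and negative outside them. Hence the whole matter reduces to showing that the roots are real and to identifying the larger one. I would then compute the discriminant $\Delta = B^2 - 4AC^2$. The crucial observation is that, since $A < 0$, we have $-4AC^2 = 4|A|C^2 \geq 0$, so
\[
\Delta = B^2 + 4|A|C^2 \geq 0
\]
automatically; the hypothesis $B^2 - 4AC^2 \neq 0$ then upgrades this to $\Delta > 0$, ruling out the degenerate double root and guaranteeing two distinct real roots
\[
\epsilon_{\pm} = \frac{-B \pm \sqrt{B^2 + 4|A|C^2}}{2A}.
\]

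The next, and most error-prone, step is the sign bookkeeping: because $2A < 0$, dividing by $2A$ reverses the order, so the root carrying the minus sign in the numerator is in fact the \emph{larger} one, namely $\epsilon_{\max} = \frac{-B - \sqrt{B^2 + 4|A|C^2}}{2A}$, which is exactly the threshold appearing in the statement. Combining the downward-opening shape with the location of $\epsilon_{\max}$, any $\epsilon > \epsilon_{\max}$ lies to the right of the larger root and therefore yields $P(\epsilon) < 0$, i.e.\ sliding, which is the claim.

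I expect the only genuine pitfall to be precisely this determination of which root is larger once the division by the negative quantity $2A$ is carried out; getting it backwards would flip the inequality direction and produce an interval of $\epsilon$ rather than a half-line. A secondary, purely qualitative, remark worth including is that the product of the roots equals $C^2/A \leq 0$, so the two roots have opposite signs and $\epsilon_{\max} \geq 0$; this confirms that the threshold is compatible with the physical constraint $\epsilon > 0$ and is consistent with the earlier observation that the inequality fails in the limit $\epsilon \to 0$.
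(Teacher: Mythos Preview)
Your proposal is correct and follows essentially the same route as the paper: both treat $P(\epsilon)=A\epsilon^2+B\epsilon+C^2$ as a downward-opening parabola (since $A<0$), observe that the discriminant $B^2-4AC^2=B^2+4|A|C^2$ is automatically nonnegative and strictly positive by hypothesis, and then identify $\frac{-B-\sqrt{B^2+4|A|C^2}}{2A}$ as the larger of the two real roots, so that $P(\epsilon)<0$ holds to its right. The only cosmetic difference is that the paper cites Descartes' rule of signs to conclude the roots have opposite signs, whereas you reach the same conclusion via the product of the roots $C^2/A\leq 0$; your careful remark about the order reversal when dividing by $2A<0$ is in fact more explicit than the paper's own treatment.
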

\begin{proof}
If $A < 0 $, then $B^2 - 4 AC^2 > 0$; by Descartes' rule of signs, there will be necessarily a positive root and a negative one. Thus, inequality (\ref{per_fare_prima}) holds for $\epsilon < \epsilon_1 :=  \frac{-B + \sqrt{B^2 + 4 |A| C^2} }{2A}$, and $\epsilon > \epsilon_2 :=  \frac{-B - \sqrt{B^2 + 4 |A| C^2} }{2A}$. But $\epsilon$ has to be greater than $0$, so the only choice is $\epsilon > \epsilon_2$ (since $\epsilon_1 < 0$). 
\end{proof}
Crossing occurs, from Filippov theory, if an analogous of (\ref{disequazione}) holds, with the opposite sign, i.e.
\begin{equation} \label{disequazione2}
\left( h_y f_1 \right) \left( h_y f_2 \right) \epsilon ^2 + \Big( \left( h_y f_1 \right)  \left( h_z g \right) + \left( h_y f_2 \right) \left( h_z g \right) \Big) \epsilon + \left( h_z g \right)^2 > 0,
\end{equation}
which, in the notation of (\ref{per_fare_prima}), becomes 
\begin{equation} \label{per_fare_prima_2}
A \epsilon ^2 + B \epsilon + C^2 > 0.
\end{equation}
\begin{proposition}
Crossing occurs, in system (\ref{sing_pert_sys}), if one of the following conditions is satisfied: 
\begin{enumerate}
                 \item $A > 0$ and $B^2 - 4 AC^2 < 0$; 
                 \item $A > 0$ and $B > 0$, assuming also $B^2 - 4 AC^2 \geq 0$. 
       \end{enumerate}
\end{proposition}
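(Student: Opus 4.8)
The plan is to reduce both cases to an elementary statement about the sign of the quadratic polynomial $p(\epsilon) := A\epsilon^2 + B\epsilon + C^2$ on the positive half-line, since by definition crossing occurs precisely when (\ref{per_fare_prima_2}) holds, i.e. when $p(\epsilon) > 0$ at the (small, positive) physical value of $\epsilon$. The observation I would use throughout is that the constant term $C^2 = (h_z g)^2$ is a square, so $C^2 \geq 0$; this is exactly what allows me to invoke Descartes' rule of signs, just as in the proof of the preceding proposition.

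For the first case I would argue purely from the discriminant. If $A > 0$ and $B^2 - 4AC^2 < 0$, then $p$ has no real roots and its leading coefficient is positive, so the parabola $\epsilon \mapsto p(\epsilon)$ lies strictly above the axis for \emph{every} real $\epsilon$; in particular $p(\epsilon) > 0$ at the physical $\epsilon > 0$, which is precisely (\ref{per_fare_prima_2}). No localisation of roots is needed here.

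For the second case I would instead locate the roots. Assuming $A > 0$, $B > 0$ and $B^2 - 4AC^2 \geq 0$, the coefficient sequence $(A, B, C^2)$ exhibits no sign change (the first two entries are strictly positive and the last is $\geq 0$), so Descartes' rule of signs guarantees that $p$ has no positive real root. Equivalently, writing the roots as $\frac{-B \pm \sqrt{B^2 - 4AC^2}}{2A}$ and using $\sqrt{B^2 - 4AC^2} \leq B$, one checks directly that both roots are non-positive. Since $A > 0$ forces $p(\epsilon) \to +\infty$ as $\epsilon \to +\infty$, and $p$ has no root in $(0, \infty)$, continuity yields $p(\epsilon) > 0$ for every $\epsilon > 0$, hence (\ref{per_fare_prima_2}) again holds.

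The computations are entirely elementary, so I do not anticipate a genuine obstacle; the only point requiring a little care is the borderline behaviour at $\epsilon = 0$ when $C^2 = 0$. In that degenerate situation $p(\epsilon) = \epsilon(A\epsilon + B)$, which remains strictly positive for $\epsilon > 0$ under the hypotheses $A, B > 0$, so the conclusion is unaffected; I would flag this case explicitly to keep the root-sign analysis clean. The fact that both sufficient conditions are formulated independently of the precise value of $\epsilon$ is what makes them robust and practically checkable.
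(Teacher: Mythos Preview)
Your proposal is correct and follows essentially the same route as the paper: the discriminant argument for case~1, and Descartes' rule of signs (equivalently, direct inspection of the root formula) for case~2 to conclude that the polynomial $A\epsilon^2 + B\epsilon + C^2$ has no positive root when $A,B>0$ and $C^2\geq 0$. Your treatment is in fact slightly more careful than the paper's, which in case~2 tacitly assumes $B^2-4AC^2\neq 0$ to obtain two distinct negative roots; your explicit handling of the borderline $C^2=0$ (and, implicitly, of the double-root case) cleanly covers what the paper glosses over.
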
        
\begin{proof} \quad
\begin{enumerate}
\item Under conditions in 
1, polynomial in (\ref{per_fare_prima_2}) has no real roots, and assumes values grater than zero for every $\epsilon$ in $\R$.\\
\item Under conditions in 
2, from the Descartes' rule of signs, we know that, in the polynomial of inequality (\ref{per_fare_prima_2}), if the signs of coefficients do not change, and if $B^2 - 4 A C^2 \neq 0$, then the roots of the polynomial, denoted by $\epsilon_1$ and $\epsilon_2$, are both negative. Naturally, inequality (\ref{per_fare_prima_2}) holds in the intervals $(- \infty, \epsilon_1)$, and $(\epsilon_2, \infty)$. But we know in advance that $\epsilon$ has to be such that $0 < \epsilon << 1$, thus inequality (\ref{per_fare_prima_2}) is satisfied. 
\end{enumerate}
\end{proof}

\begin{remark}
Looking at the left hand-side of inequality (\ref{disequazione2}), we observe that, if the event function $h$ does not depend on $y$ (i.e. if $h$ is such that $h = h(z)$), then the only allowed behaviour of system (\ref{sing_pert_sys}) is crossing. \\
Conversely, if the switching function $h$ depends just on $y$, i.e. $h = h(y)$, then necessarily the terms $B$ and $C^2$ are zero. In this case condition for sliding or crossing is driven uniquely by the sign of $A$.  
\end{remark}

\begin{example}
As an example of the latter remark, let us consider the following system, which is a modified, discontinuous version of an example given in \cite{Ostermann}. 
\begin{equation} \label{Ost_modified}
\left\{
\begin{array}{rl}
 \dot{y_1} =& z \\ 
 \dot{y_2} =& -{\rm sign}(y_1) y_1 \\
 \epsilon \dot{z} =& y_2 - z - \epsilon y_1 
\end{array}.
\right.
\end{equation}
Obviously, switching function $h$ does not depend on $z$, since $h = h(y) = y_1$. Thus, both $B$ and $C$ are zero, and, being $A = z^2 > 0$, then we can state that system (\ref{Ost_modified}) crosses the switching manifold at the event point. 
\end{example}

\subsection{Numerical issues}
In subsection \ref{DAE_Approximation} we have discussed if it is convenient or not to approximate the system (\ref{sing_pert_sys}) with its reduced order model (\ref{eqn:DAE}). From a strictly numerical point of view, it is well known that the great advantage of Rosenbrock methods with respect to implicit Runge-Kutta methods is the \emph{linearity}. Now, if we integrated system (\ref{Reduced_Order_Model}) by a Rosenbrock method, we would lose this advantage (see \cite{Hairer.Wanner.2008}). For this reason, we have chosen to integrate the original system (\ref{sing_pert_sys}), and not the corresponding differential algebraic system. \\
\medskip
\scriptsize
 \begin{table}[htbp]
\label{table1}

\vspace{3mm}
    \centering

    {
        \begin{tabular}{cccc}
         \hline
         \hline

  stepsize $\tau$              &  $\epsilon$   &  Global Error & Reduction Factor   \\
    \hline
    \hline &&&\\
    1E-3      & 1E-2 &    2.180627E-4  & \\                            
    0.5E-3    & 1E-2 & 1.084872E-4      & $ 2.0100 $ \\  
    0.25E-3   & 1E-2 &  5.426686E-5   &   $1.9991 $  \\  
    0.125E-3  & 1E-2 &  2.713762E-5   &  $ 1.996$  \\  
    0.0625E-3 & 1E-2 &  1.356825E-5   &  $ 1.995$  \\  
    
\hline
\hline
\\[0.2 cm]
    1E-5      & 1E-3 &    2.202832E-4  & \\                            
    0.5E-5    & 1E-3 & 1.102479E-4   & $ 1.9980 $ \\  
    0.25E-5   & 1E-3 &  5.526992E-5  &   $1.9947 $  \\  
    0.125E-5  & 1E-3 &  2.765977E-5 &  $ 1.9982$  \\  
    0.0625E-5 & 1E-3 &  1.382432E-5 &  $ 2.0008$  \\  
\hline
\hline
\\[0.2 cm]
     1E-6          & 1E-4 &    2.202745E-4  & \\                            
    0.51E-6    & 1E-4 & 1.107174E-4  & $ 1.9895 $ \\  
    0.25E-6    & 1E-4 & 5.545491E-5  &   $1.9965 $  \\  
    0.1251E-6  & 1E-4 &  2.770196E-5 &  $ 2.0018$  \\  
    0.0625E-6  & 1E-4 &  1.385162E-5 &  $ 1.9999$  \\  
    \hline
    \hline
        \end{tabular}}
        \caption{Global Error and Reduction Factor for first order Rosenbrock method (\ref{eqn:ros1}), for different values of parameter $\epsilon$}
        \label{ContExtRos1}
\end{table}

\normalsize
We have considered the system (\ref{esempio_Kowalczyk}) with $\theta = -0.9$.
First of all, we have considered one-stage Rosenbrock method
\begin{subequations}
\label{eqn:ros1}
\begin{align}
x_{1}  &=\ x_{0}+ k_1  \\
(I-\tau J)\ k_1 \ &=\ f_1(x_0). \label{eqn:rosk} 
\end{align}
\end{subequations}

According to the numerical experiments, we observe that the latter method, applied to system (\ref{esempio_Kowalczyk}), does not lose its order, even reducing the value of the parameter $\epsilon$. Now, the event is found by the bisection technique applied to the continuous extension $X_1(\theta) = x_0 + \theta k_1$, where $k_1$ is the same as in (\ref{eqn:rosk}) and $0 \leq \theta \leq 1$. Once the first event is localized with the requested tolerance (i.e the state vector is on the sliding surface, with a good approximation), we have computed the global error in that point, for different values of the step size. Halving the step size, we have also provided the reduction factor, i.e. the ratio between the global error obtained with step size $\tau$ and the one we got using stepsize $\frac{\tau}{2}$. This confirms that scheme (\ref{eqn:ros1}) behaves like a first order method. 
\smallskip

Similar results are provided for the second order Rosenbrock method (\ref{eqn:Isabella3}), together with its continuous extension (\ref{ISABELLA_cleavage}), used in the context of event location. Table \ref{MariaMazza} confirms that the continuous extension (\ref{eqn:Isabella3}) is a second order interpolant for the corresponding method. 
The computational saving of using the continuous extension is definetely more evident for the second order method with respect to first order one. \\

 \scriptsize
 \begin{table}[htbp]

\vspace{3mm}
    \centering

    {
        \begin{tabular}{cccc}
         \hline
         \hline

  stepsize $\tau$              &  $\epsilon$   &  Global Error & Reduction Factor   \\
    \hline
    \hline &&&\\
     1E-3                         & 1E-2 &    7.880118E-5  & \\                            
    0.51E-3   & 1E-2  &    2.079523E-5 & 3.7893    \\  
    0.25E-3   & 1E-2  &    5.760348E-6 & 3.6100    \\  
    0.125E-3  & 1E-2  &    1.434942E-6 & 4.0143    \\  
    0.0625E-3 & 1E-2  &    3.581187E-7 & 4.0068    \\
\hline
\hline
\\[0.2 cm]
      1E-5       & 1E-3  &    9.336405E-7  & \\                 
      0.5E-5     & 1E-3  &    2.331221E-7  & 4.0049    \\  
      0.25E-5    & 1E-3  &    5.825643E-8  & 4.0016    \\  
      0.125E-5   & 1E-3  &    1.456912E-8  & 3.9986    \\  
      0.0625E-5  & 1E-3  &    3.648825E-9  & 3.9928    \\
\hline
\hline
\\[0.2 cm]
      1E-5         & 1E-4 &    8.310706E-5  & \\       
    0.5E-5         & 1E-4 &    2.125030E-5  & 3.9108    \\  
    0.25E-5        & 1E-4 &    5.678856E-6  & 3.7420    \\  
    0.125E-5       & 1E-4 &    1.427827E-6  & 3.9772    \\  
    0.0625E-5      & 1E-4 &    3.658980E-7  & 3.9022 \\
    \hline
    \hline
          
        \end{tabular}}
        \caption{Global Error and Reduction Factor for second order Rosenbrock (\ref{eqn:Isabella3}) for different values of $\epsilon$}
        \label{MariaMazza}
\end{table}

\normalsize

\section{Conclusion and future work}
In this paper we have studied some issues about the applications of Rosenbrock methods in the context of discontinuous differential systems. We focused on conditions for \emph{one-sided} Rosenbrock methods, and we showed the convenience of using the continuous extension in the context of event location. \\
We could carry on our research by studying the integration of sliding vector field by means of any implicit and semi-implicit Runge-Kutta scheme. \\
Of particular interest cuold be also the study of second order differential equations involving discontinuity just in the second derivative. These problems arise frequently in impact mechanics. On the other hand, there is a huge literature on \emph{smooth} second-order differential equations, which could be precious in this context (see, for example, \cite{Dambrosio.Esposito.Paternoster.2011, Dambrosio.Esposito.Paternoster.2012}).



\end{document}